\theoremstyle{plain}  
\newtheorem{theorem}{Theorem}[section]
\newtheorem{lemma}{Lemma}[section]
\newtheorem{proposition}{Proposition}[section]
\newtheorem{corollary}{Corollary}[section]
\numberwithin{equation}{section}
\theoremstyle{remark}
\newtheorem{remark}{Remark}[section]
 \numberwithin{equation}{section}
 \numberwithin{equation}{section}
\def\<{\left < }
\def\>{\right >}
\def\({\left ( }
\def\){\right )}
\def\sech{\,{\rm sech\,}}
\def\e{\eqref}
\def\i{{\rm i}\hskip.01in}
\begin{document}
\def\e{\eqref}

\title[Minimal flat Lorentzian surfaces]
{Minimal flat Lorentzian surfaces in Lorentzian complex space forms}

\author[B. Y. Chen]{BANG-YEN CHEN (East Lansing)}

 \address{Department of Mathematics, 
	Michigan State University, East Lansing, MI 48824--1027, USA}

\email{bychen@math.msu.edu}

\begin{abstract}   In this article we study  minimal flat Lorentzian surfaces in Lorentzian complex space forms. First we prove that,  for minimal flat Lorentzian surfaces in a Lorentzian complex form, the equation of Ricci is a consequence of the equations of Gauss and Codazzi. Then we classify minimal flat Lorentzian surfaces in the Lorentzian complex plane ${\bf C}^2_1$. Finally, we classify minimal flat slant surfaces in  Lorentzian complex projective plane $CP^2_1$ and in Lorentzian complex hyperbolic plane $CH^2_1$.
\end{abstract}

\keywords{Lorentzian surface; slant surfaces; minimal surface; Lagrangian surface; Lorentzian complex  space form.}

 \subjclass[2000]{Primary: 53C40; Secondary  53C42, 53C50}

\date{}
 
\maketitle

\section{Introduction}

Let $\tilde M^n_i(4c)$ be an indefinite complex space form of complex dimension $n$ and {\it complex index} $i$. The complex index is defined as the complex dimension of the largest complex negative
definite subspace of the tangent space. If $i=1$, we say that $\tilde M^n_i(4c)$ is Lorentzian. The curvature tensor $\tilde R$ of $\tilde M^n_i(4c)$ is given by
\begin{equation}\begin{aligned}\label{1.1}&\tilde R(X,Y)Z=c\{\left<Y,Z\right> \hskip-.02in X\hskip-.02in - \hskip-.02in \left<X,Z\right>\hskip-.02in  Y\hskip-.02in +\hskip-.02in \left<JY,Z\right> JX\\& \ \hskip.9in  -\left<JX,Z\right>\hskip-.02in JY\hskip-.02in +\hskip-.02in 2\left<X,JY\right>\hskip-.02in  JZ\}.\end{aligned}\end{equation}

 Let {\bf C}$^n$ denote the  complex number $n$-space with complex coordinates
$z_1,\ldots,$ $z_n$. The {\bf C}$^n$ endowed with $g_{i,n}$, i.e., the real part of the Hermitian form
$$b_{i,n}(z,w)=-\sum_{k=1}^i \bar z_kw_k +\sum_{j=i+1}^n \bar z_jw_j,\quad z,w\in\hbox{\bf C}^n,$$ defines a flat indefinite complex space form with complex index $i$. We simply denote the pair ({\bf C}$^n,g_{i,n})$ by {\bf C}$^n_i$. 
Consider the differentiable manifold:
$$S^{2n+1}_2(c)=\{z\in \hbox{\bf C}_1^{n+1} \,;\, b_{1,n+1}(z,z)= c^{-1}>0 \},$$ which is an indefinite real space form of constant sectional curvature $c$.   The Hopf fibration
$$\pi: S^{2n+1}_2(c)\to CP^n_1(4c):
z\mapsto z\cdot\hbox{\bf C}^*$$ is a submersion and there exists a unique pseudo-Riemannian
metric of complex index one on $CP^n_1(4c)$ such that
$\pi$ is a Riemannian submersion. The pseudo-Riemannian manifold $CP^n_1(4c)$ is a
Lorentzian complex space form of positive holomorphic sectional curvature $4c$.

Analogously, if $c<0$, consider
$$H^{2n+1}_2(c)=\{z\in\hbox{\bf C}_2^{n+1}\,;\, b_{2,n+1}(z,z)= c^{-1}<0 \},$$ which is an
indefinite real space form of constant sectional curvature $c<0$. The Hopf fibration
$$\pi: H^{2n+1}_2(c)\to CH^n_1(4c): z\mapsto z\cdot\hbox{\bf C}^*$$ is a submersion
and there exists a unique pseudo-Riemannian metric of complex index 1 on $CH^n_1(4c)$
such that $\pi$ is a Riemannian submersion. The pseudo-Riemannian manifold $CH^n_1(4c)$ is a Lorentzian complex space form of negative
holomorphic sectional curvature $4c$.

A complete simply-connected Lorentzian complex
space form  $\tilde M^n_1(4c)$ is holomorphically isometric to {\bf C}$^n_1$,  $CP^n_1(4c)$, or $CH^n_1(4c)$, according
to $c=0, c>0$ or $c<0$, respectively.

 Lorentzian surfaces in pseudo-Riemannian spaces of constant curvature with signature $(2,2)$ have been studied by L. Verstraelen and M. Pieters \cite{VP1,VP2} among others. In this article, we study minimal flat Lorentzian surfaces in Lorentzian complex space forms.

In section 3 of this article, we provide the basic results  for Lorentzian surfaces in Lorentzian K\"ahler surfaces. In particular, we show that each tangent plane of   a Lorentzian surface cannot be $J$-invariant. In section 4, we prove that  the equation of Ricci is a consequence of equations of Gauss and Codazzi for  minimal flat Lorentzian surfaces  in  Lorentzian complex space forms.
The complete classification of minimal flat Lorentzian surfaces in Lorentzian complex  plane ${\bf C}^2_1$ is obtained in section 5. In section 6 we show that the only minimal flat slant surfaces in non-flat Lorentzian complex space forms are the Lagrangian ones.
In this section, we also classify minimal flat slant surfaces in Lorentzian complex plane $CP^2_1$. In the last section, we provide the classification of minimal flat Lagrangian surfaces in the Lorentzian complex hyperbolic plane $CH^2_1$.

\section{Preliminaries}

Let $M$ be a Lorentzian surface of a Lorentzian K\"ahler surface $\tilde M^2_1$ equipped with an almost complex structure $J$ and metric $\tilde g$. Let $\<\;\,,\;\>$ denote the inner product associated with $\tilde g$.
Denote  the induced metric on $M$ by $g$. 

Let  $\nabla$ and $\tilde\nabla$ denote the Levi-Civita connection on $M$ and $\tilde M^2_1$, respectively. Then the formulas of Gauss and Weingarten are given respectively by (cf. \cite{c0,c1,CD,CV})
\begin{align} &\label{2.1}\tilde \nabla_XY=\nabla_XY+h(X,Y),\\& \label{2.2}\tilde\nabla_X\xi=-A_\xi X+D_X \xi\end{align}
for vector fields $X,Y$ tangent to $M$ and $\xi$ normal to $M$,
where $h,A$ and $D$ are the second fundamental form, the shape operator and the normal connection, respectively.

The shape operator and the second fundamental form are related by
\begin{align}\label{2.3} \<h(X,Y),\xi\>=\<A_{\xi}X,Y\>\end{align}
for $X,Y$ tangent to $M$ and $\xi$ normal to $M$. 

For each normal vector $\xi$ of $M$ at $x\in M$, the shape operator $A_{\xi}$ is a symmetric endomorphism of the tangent space $T_xM$. However, for Lorentzian surfaces the shape operator $A_\xi$ is not diagonalizable in general.

The mean curvature vector  is defined by \begin{align}\label{2.4} &H=\frac{1}{2}{\rm trace}\, h.\end{align}
A Lorentzian surface in $\tilde M^2_1$ is called {\it minimal}  if $H=0$ at each point on $M$. 

For a Lorentzian surface $M$ in a Lorentzian complex space form $\tilde M^2_1(4c)$, the equations of Gauss, Codazzi and Ricci are given respectively by
\begin{align}\label{2.5} & \<R(X,Y)Z,W\> =\<\right.\hskip-.02in\tilde R(X,Y)Z,W\left.\hskip-.02in \>+  \<h(X,W),h(Y,Z)\>\\&\notag \hskip1.2in  - \<h(X,Z),h(Y,W)\>, \\ &  \label{2.6} (\tilde R(X,Y)Z)^\perp=(\bar\nabla_X h)(Y,Z) - (\bar\nabla_Y h)(X,Z),
\\&\label{2.7} \<R^D(X,Y)\xi,\eta\>=\<\right.\tilde R(X,Y)\xi,\eta\left.\>+\<[A_\xi,A_\eta]X,Y\>,
\end{align}
where $X,Y,Z,W$ are vector tangent to $M$,
and $\nabla h$ is defined by
\begin{equation}\begin{aligned}\label{2.8}(\bar \nabla_X h)(Y,Z) = D_X h(Y,Z) - h(\nabla_X Y,Z) - h(Y,\nabla_X
Z).\end{aligned}\end{equation}

\section{Basics on  Lorentzian   surfaces}

Let $M$ be a Lorentzian surface in a Lorentzian K\"ahler surface $(\tilde M^2_1,g,J)$. For each tangent vector $X$ of $M$, we put
\begin{align} \label{3.1} &JX=PX+FX ,\end{align}
where $PX$ and $FX$ are the tangential and the normal components of $JX$. 

On the Lorentzian surface $M$ there exists a {\it pseudo-orthonormal}  local frame $\{ e_1, e_2\}$ on $M$ such that 
\begin{align} \label{3.2} & \<e_1, e_1\>=\< e_2,e_2\>=0,\;\<e_1, e_2\>=-1.\end{align} 
For a pseudo-orthonormal frame $\{e_1,e_2\}$ on  $M$  satisfying \e{3.2}, it follows from \e{3.1},  \e{3.2}, and $\<JX,JY\>=\<X,Y\>$  that 
\begin{align} \label{3.3} &Pe_1=(\sinh\alpha) e_1,\;\; Pe_2=-(\sinh \alpha) e_2\end{align} for some function $\alpha$, which is called the Wirtinger angle. 

When the Wirtinger angle $\alpha$ is constant on $M$, the Lorentzian  surface $M$ is called a {\it slant surface} (cf. \cite{c1,CD,CT}).  In this case, $\alpha$ is called the {\it slant angle}; the slant surface is called $\alpha$-slant. A $\alpha$-slant  surface is Lagrangian if and only if $\alpha=0$.  
Obviously, slant surfaces (in particular, Lagrangian surfaces) in a Lorentzian K\"ahler surface are Lorentzian surfaces.

If we put 
\begin{align} \label{3.4} &e_3=(\sech \alpha)Fe_1,\;\; e_4=(\sech \alpha)Fe_2,\end{align}
then we find from \e{3.1}-\e{3.4}  that
 \begin{align}& \label{3.5}J e_1=\sinh \alpha e_1+\cosh\alpha e_3,\hskip.2in Je_2=-\sinh\alpha e_2+\cosh\alpha e_4,\\& \label{3.6}Je_3=-\cosh \alpha e_1-\sinh \alpha e_3,\;\;  Je_4=-\cosh \alpha e_2+\sinh \alpha e_4,\\\label{3.7} &\<e_3,e_3\>=\<e_4,e_4\>=0,\;\; \<e_3,e_4\>=-1.\end{align}
   We call such a frame $\{e_1,e_2,e_3,e_4\}$ chosen above an {\it adapted pseudo-orthonormal frame} for the Lorentzian  surface $M$  in $\tilde M^2_1$.

 From \e{3.5} we obtain the following.
 
 \begin{proposition} Let $M$ be a Lorentzian surface in Lorentzian K\"ahler surface $(\tilde M^2_1,g,J)$. Then every tangent plane of $M$ is not $J$-invariant.
 \end{proposition}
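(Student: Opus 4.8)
The plan is to read the conclusion directly off the decomposition \e{3.5} in the adapted pseudo-orthonormal frame. First I would recall the set-up: the vectors $e_1,e_2$ span the tangent plane $T_xM$, while $e_3,e_4$ span the normal space $T_x^\perp M$, and by \e{3.7} the pair $e_3,e_4$ is pseudo-orthonormal, hence linearly independent and in particular nonzero. By definition, $T_xM$ is $J$-invariant precisely when $JX\in T_xM$ for every $X\in T_xM$; equivalently, when the normal component $FX$ of $JX$ vanishes for all $X$ tangent to $M$. So the statement reduces to showing that $F$ cannot vanish on $T_xM$.

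The key computation is then simply \e{3.5}, which gives $Je_1=(\sinh\alpha)e_1+(\cosh\alpha)e_3$ and $Je_2=-(\sinh\alpha)e_2+(\cosh\alpha)e_4$. Reading off the normal parts, $Fe_1=(\cosh\alpha)e_3$ and $Fe_2=(\cosh\alpha)e_4$. Since $\cosh\alpha\ge 1>0$ for every real value of the Wirtinger angle $\alpha$, and since $e_3,e_4$ are nonzero, these normal components never vanish. Hence $Je_1\notin T_xM$ (and likewise $Je_2\notin T_xM$), so $T_xM$ fails to be $J$-invariant at every point $x$, as claimed.

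There is essentially no obstacle here: the entire content is the observation that the coefficient $\cosh\alpha$ multiplying the normal part is bounded away from zero. This is precisely the point at which the Lorentzian signature forces a conclusion opposite to the Riemannian one. In the definite case the analogous normal coefficient involves $\sin$ of the K\"ahler angle, which does vanish, permitting holomorphic (complex) curves; here the hyperbolic nature of \e{3.3} replaces it by $\cosh\alpha$, which is never zero, ruling out $J$-invariant tangent planes entirely. One should take care only to note that the identity \e{3.5} itself rests on the normalizations \e{3.2} and \e{3.4} together with $\<JX,JY\>=\<X,Y\>$, so that the frame is genuinely adapted and $e_3,e_4$ are indeed normal; granting those, the argument is immediate.
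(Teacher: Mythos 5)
Your argument is exactly the paper's: the paper states only ``From \eqref{3.5} we obtain the following,'' and your proposal correctly fills in that one-liner by reading off $Fe_1=(\cosh\alpha)e_3$, $Fe_2=(\cosh\alpha)e_4$ and noting that $\cosh\alpha$ never vanishes. The proof is correct and takes essentially the same approach as the paper.
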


 We need the following.

 \begin{lemma}  \label{L:3.1}  If $M$ is a Lorentzian surface in a Lorentzian K\"ahler surface $\tilde M^2_1$, then with respect to  an adapted pseudo-orthonormal  frame  we have
\begin{align} &\label{3.8} \nabla_X e_1=\omega(X) e_1,\;\;  \nabla_X e_2=-\omega(X) e_2, \\& \label{3.9}
 D_X e_3=\Phi(X) e_3,\; \; D_X e_4=-\Phi(X) e_4 \end{align}
for some 1-forms $\omega,\Phi$ on $M$.
\end{lemma}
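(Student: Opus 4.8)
The plan is to deduce both pairs of formulas purely from the compatibility of the two connections with their respective pseudo-metrics, exactly as one obtains a connection $1$-form for a Lorentzian orthonormal frame.

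First I would treat the tangential statement \eqref{3.8}. Since $\{e_1,e_2\}$ spans $TM$, write $\nabla_X e_1 = a(X)e_1 + b(X)e_2$ and $\nabla_X e_2 = c(X)e_1 + d(X)e_2$, where $a,b,c,d$ are $C^\infty(M)$-linear in $X$ because $\nabla$ is a connection. The three relations in \eqref{3.2} are then differentiated using the metric compatibility of the Levi-Civita connection $\nabla$. From $\langle e_1,e_1\rangle=0$ we get $0 = 2\langle \nabla_X e_1, e_1\rangle = -2b(X)$, hence $b\equiv 0$; from $\langle e_2,e_2\rangle=0$ we get analogously $-2c(X)=0$, hence $c\equiv 0$; and differentiating the constant $\langle e_1,e_2\rangle=-1$ gives $\langle\nabla_X e_1,e_2\rangle + \langle e_1,\nabla_X e_2\rangle = -a(X)-d(X)=0$, so $d=-a$. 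Setting $\omega:=a$ yields exactly \eqref{3.8}.

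The normal statement \eqref{3.9} is proved in the same way, now using that the normal connection $D$ is compatible with the induced fibre metric on the normal bundle and that $\{e_3,e_4\}$ satisfies the formally identical relations \eqref{3.7}. Writing $D_X e_3 = p(X)e_3+q(X)e_4$ and $D_X e_4 = r(X)e_3+s(X)e_4$, and differentiating the three relations $\langle e_3,e_3\rangle=\langle e_4,e_4\rangle=0$ and $\langle e_3,e_4\rangle=-1$, forces $q\equiv 0$, $r\equiv 0$ and $s=-p$; setting $\Phi:=p$ gives \eqref{3.9}.

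There is no genuine obstacle here: the statement is a formal consequence of the two compatibility identities, and since all coefficient functions arise as contractions of $\nabla$ or $D$ against a fixed frame, their $C^\infty(M)$-linearity in $X$ is automatic, so $\omega$ and $\Phi$ are honest $1$-forms. The only point requiring care is the bookkeeping of signs coming from the pseudo-orthonormal (rather than orthonormal) normalization $\langle e_1,e_2\rangle=\langle e_3,e_4\rangle=-1$, which is precisely what produces the antisymmetric pattern $(\omega,-\omega)$ and $(\Phi,-\Phi)$ rather than two independent diagonal coefficients.
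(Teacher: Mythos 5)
Your argument is correct and is essentially identical to the paper's own proof: both expand $\nabla e_1,\nabla e_2$ and $D e_3, De_4$ in the given frames and use metric compatibility applied to the relations \eqref{3.2} and \eqref{3.7} to kill the off-diagonal coefficients and force the diagonal ones to be opposite. You simply spell out the differentiation steps that the paper leaves implicit.
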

\begin{proof} Let us put
\begin{align} \label{3.10} &\nabla_X e_1=\omega_1^1(X)e_1+\omega_1^2(X)e_2,\;\; \nabla_X e_2=\omega_2^1(X)e_1+\omega_2^2(X)e_2.
\end{align} Then we obtain from \eqref{3.2} that $\omega_1^2=\omega_2^1=0$ and $ \omega_2^2=-\omega_1^1.$ Thus, if we put $\omega=\omega_1^1$, then we get \eqref{3.8}.
Similarly, if we put
\begin{align} \label{3.11} &  D_Xe_3=\omega_3^3(X)e_3+\omega_3^4(X)e_4,\;\; D_Xe_4= \omega_4^3(X)e_3+\omega_4^4 (X)e_4,\end{align} then it follows from \eqref{3.7} that $\omega_3^4=\omega_4^3=0$ and $ \omega_3^3=-\omega_4^4.$ So, after putting $\Phi=\omega_3^3$, we get \eqref{3.9}. \end{proof}

For a Lorentzian surface $M$ in $\tilde M^2_1$ with second fundamental form $h$, we put \begin{align}\label{3.12} h(e_i,e_j)=h^3_{ij}e_3+h^4_{ij}e_4,\end{align}
where $e_1,e_2,e_3,e_4$ is an adapted pseudo-orthonormal frame.

\begin{lemma}  \label{L:3.2}  If $M$ is a Lorentzian surface in a Lorentzian K\"ahler surface $\tilde M^2_1$, then with respect to  an adapted pseudo-orthonormal frame  $\{e_1,e_2,e_3,e_4\}$ we have
\begin{align} & \label{3.13} \begin{cases} A_{e_3}e_j=h^4_{j2}e_1+h^4_{1j}e_2,\\ A_{e_4}e_j=h^3_{j2}e_1+h^3_{1j}e_2,\end{cases}\\&\label{3.14}e_j\alpha=(\omega_j- \Phi_j)\coth \alpha -2h^3_{1j},
\\ \label{3.15} &e_1\alpha=h^4_{12}-h^3_{11},\;\; e_2\alpha=h^4_{22}-h^3_{12},
\\&\label{3.16} \omega_j-\Phi_j=(h^3_{1j}+h^4_{j2})\tanh\alpha,
\end{align} for $j=1,2$, where $\omega_j=\omega(e_j)$ and $\Phi_j=\Phi(e_j)$.
\end{lemma}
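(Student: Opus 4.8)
The plan is to obtain \eqref{3.13} directly from the algebraic relation \eqref{2.3} between the shape operator and the second fundamental form, and then to extract the remaining three identities from the K\"ahler condition $\tilde\nabla J=0$.

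First I would establish \eqref{3.13}. Writing $A_{e_3}e_j=a\,e_1+b\,e_2$ and pairing with $e_1,e_2$, the null relations \eqref{3.2} give $\langle A_{e_3}e_j,e_1\rangle=-b$ and $\langle A_{e_3}e_j,e_2\rangle=-a$. On the other hand, \eqref{2.3} together with \eqref{3.12} and the null relations \eqref{3.7} yield $\langle h(e_i,e_j),e_3\rangle=-h^4_{ij}$ and $\langle h(e_i,e_j),e_4\rangle=-h^3_{ij}$. Comparing and using the symmetry $h^a_{ij}=h^a_{ji}$ gives $a=h^4_{j2}$ and $b=h^4_{1j}$, which is the first line of \eqref{3.13}; the second line follows identically with $e_4$ in place of $e_3$.

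For the remaining identities I would exploit that $\tilde M^2_1$ is K\"ahler, so $\tilde\nabla_X(JY)=J(\tilde\nabla_X Y)$. Taking $Y=e_1$ and $X=e_j$, I would expand the left-hand side by differentiating \eqref{3.5} and applying the Gauss and Weingarten formulas \eqref{2.1}--\eqref{2.2}, Lemma~\ref{L:3.1}, and \eqref{3.13}; the right-hand side I would expand by applying $J$ to $\tilde\nabla_{e_j}e_1=\nabla_{e_j}e_1+h(e_j,e_1)$ via \eqref{3.5}--\eqref{3.6}. Comparing the four components of the resulting identity against the frame $\{e_1,e_2,e_3,e_4\}$, the $e_4$-component is an identity and the $e_2$-component reduces to the symmetry $h^4_{1j}=h^4_{j1}$; the $e_1$-component, after dividing by $\cosh\alpha$, gives $e_j\alpha=h^4_{j2}-h^3_{j1}$, which is \eqref{3.15} (using $h^3_{21}=h^3_{12}$ when $j=2$); and the $e_3$-component gives $(\omega_j-\Phi_j)\cosh\alpha=\sinh\alpha\,(h^3_{1j}+h^4_{j2})$, which upon dividing by $\cosh\alpha$ is \eqref{3.16}.

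Finally, \eqref{3.14} is a purely algebraic consequence of the two identities just obtained: multiplying \eqref{3.16} by $\coth\alpha$ gives $(\omega_j-\Phi_j)\coth\alpha=h^3_{1j}+h^4_{j2}$, and subtracting $2h^3_{1j}$ yields $(\omega_j-\Phi_j)\coth\alpha-2h^3_{1j}=h^4_{j2}-h^3_{1j}$, which equals $e_j\alpha$ by \eqref{3.15}. I do not anticipate any conceptual obstacle here; the computation is routine once $\tilde\nabla J=0$ is invoked, and the only real care required is the bookkeeping of the four frame components and the consistent use of the symmetry $h^a_{ij}=h^a_{ji}$ together with the null pairings \eqref{3.2} and \eqref{3.7}. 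One should also note that \eqref{3.14} presupposes $\alpha\neq0$, the factor $\coth\alpha$ being singular in the Lagrangian case.
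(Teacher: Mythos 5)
Your proposal is correct and follows essentially the same route as the paper, whose proof is only the one-line remark that the lemma follows by direct computation from $\tilde\nabla_X(JY)=J\tilde\nabla_XY$ together with \eqref{3.5}--\eqref{3.7} and Lemma \ref{L:3.1}; you have simply carried out that computation in full (deriving \eqref{3.13} from \eqref{2.3} with the null pairings, reading off \eqref{3.15} and \eqref{3.16} from the frame components of the K\"ahler identity, and obtaining \eqref{3.14} algebraically from the other two). The details check out, including your observation that the $e_3$-component needs the $e_1$-component result substituted in to reach the stated form of \eqref{3.16}.
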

\begin{proof} This is done by direct computation using $\tilde\nabla_X(JY)=J\tilde\nabla_XY$ together with \e{3.5}-\e{3.7}, and Lemma 3.2.
\end{proof}

\section{Fundamental equations of minimal flat Lorentzian surfaces}

In general, the three fundamental equations of Gauss, Codazzi and Ricci are independent. However, for  minimal flat Lorentzian surfaces  in Lorentzian complex space forms we have the following.

\begin{theorem}\label{P:4.1} The equation of Ricci is a consequence of the equations of Gauss and Codazzi for  minimal flat  Lorentzian surfaces  in a Lorentzian complex space form $\tilde M_1^2(4c)$.
\end{theorem}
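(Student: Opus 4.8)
The plan is to reduce the Ricci equation to a single scalar identity and then show that this identity is already forced by Gauss and Codazzi. By Lemma \ref{L:3.1} the normal connection acts diagonally, $D_Xe_3=\Phi(X)e_3$ and $D_Xe_4=-\Phi(X)e_4$, so a direct computation of $R^D(e_1,e_2)e_3=D_{e_1}D_{e_2}e_3-D_{e_2}D_{e_1}e_3-D_{[e_1,e_2]}e_3$ gives $R^D(e_1,e_2)e_3=d\Phi(e_1,e_2)e_3$ and $R^D(e_1,e_2)e_4=-d\Phi(e_1,e_2)e_4$. Hence the entire normal curvature is carried by the one scalar $\langle R^D(e_1,e_2)e_3,e_4\rangle=-d\Phi(e_1,e_2)$, and \e{2.7} collapses to a single relation between $d\Phi(e_1,e_2)$, the ambient term $\langle\tilde R(e_1,e_2)e_3,e_4\rangle$ and the commutator term $\langle[A_{e_3},A_{e_4}]e_1,e_2\rangle$.

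First I would record minimality: tracing $h$ against the metric \e{3.2} gives $H=-h(e_1,e_2)$, so $H=0$ means $h^3_{12}=h^4_{12}=0$ and the only surviving components are $h^3_{11},h^4_{11},h^3_{22},h^4_{22}$. Using \e{1.1} with \e{3.5}--\e{3.7} I would evaluate $\langle\tilde R(e_1,e_2)e_3,e_4\rangle=c(1+3\sinh^2\alpha)$, and using the shape operators of Lemma \ref{L:3.2} I would compute $\langle[A_{e_3},A_{e_4}]e_1,e_2\rangle=-(h^3_{11}h^4_{22}-h^4_{11}h^3_{22})$. Thus the Ricci equation becomes the scalar statement
\[-d\Phi(e_1,e_2)=c(1+3\sinh^2\alpha)-(h^3_{11}h^4_{22}-h^4_{11}h^3_{22}).\]

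The key device is to feed the first-order data into $d\Phi(e_1,e_2)=e_1\Phi_2-e_2\Phi_1+\omega_2\Phi_1+\omega_1\Phi_2$, where $[e_1,e_2]=-\omega_2e_1-\omega_1e_2$ by \e{3.8}. Under minimality \e{3.16} reads $\Phi_1=\omega_1-h^3_{11}\tanh\alpha$ and $\Phi_2=\omega_2-h^4_{22}\tanh\alpha$; substituting and differentiating, the purely intrinsic pieces assemble into $e_1\omega_2-e_2\omega_1+2\omega_1\omega_2=d\omega(e_1,e_2)$, which vanishes because flatness forces $R=0$. The remaining derivatives $e_1h^4_{22}$ and $e_2h^3_{11}$ I would eliminate using the two Codazzi equations from \e{2.6} with $(X,Y,Z)=(e_1,e_2,e_2)$ and $(e_1,e_2,e_1)$, whose ambient normal parts are $3c\sinh\alpha\cosh\alpha\,e_4$ and $3c\sinh\alpha\cosh\alpha\,e_3$. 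Combining these with \e{3.15}, namely $e_1\alpha=-h^3_{11}$ and $e_2\alpha=h^4_{22}$, and the identity $\tanh^2\alpha+\sech^2\alpha=1$, the expression collapses to $d\Phi(e_1,e_2)=-6c\sinh^2\alpha+2h^3_{11}h^4_{22}$.

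Finally I would substitute this into the scalar Ricci identity above; after cancellation it reduces exactly to $h^3_{11}h^4_{22}+h^4_{11}h^3_{22}=c(3\sinh^2\alpha-1)$, which is precisely the Gauss equation for the flat surface (the vanishing of $\langle R(e_1,e_2)e_2,e_1\rangle$). Hence Ricci holds automatically. The main obstacle is bookkeeping rather than conceptual: one must organize the differentiation of $\Phi_j$ so that the $\omega$-terms visibly regroup into $d\omega(e_1,e_2)$ and are killed by flatness, while the $h$-derivative terms are cleanly replaced by Codazzi data, the whole computation surviving only through the hyperbolic identity $\tanh^2\alpha+\sech^2\alpha=1$. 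Expressing $\Phi_j$ via \e{3.16} at the outset is what makes all these cancellations transparent.
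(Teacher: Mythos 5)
Your proof is correct and follows essentially the same route as the paper: reduce the Ricci equation to the single scalar $\langle R^D(e_1,e_2)e_3,e_4\rangle=-d\Phi(e_1,e_2)$, use \eqref{3.15}--\eqref{3.16} to express $\Phi$ through the second fundamental form, evaluate $d\Phi(e_1,e_2)$ via the Codazzi equations (your $d\Phi(e_1,e_2)=2h^3_{11}h^4_{22}-6c\sinh^2\alpha$ is exactly the paper's \eqref{4.14} after substituting \eqref{4.11}), and recognize the residue as the Gauss equation. The only cosmetic difference is that the paper fixes flat coordinates so that $\omega=0$ from the outset, whereas you keep a general pseudo-orthonormal frame and dispose of the $\omega$-terms by noting $d\omega(e_1,e_2)=0$ from flatness.
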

\begin{proof} Let $M$ be a  minimal flat Lorentzian  surface in  a Lorentzian complex space form $\tilde M^2_1(4c)$. Since $M$ is flat, we may assume that $M$ is  an open connected subset  of  $\mathbb E^2_1$ equipped with the Lorentzian metric tensor:
\begin{align}\label{4.1} g_o=-dx\otimes dy-dy\otimes dx.\end{align}

Put $e_1=\partial/\partial x,e_2=\partial/\partial y$. Then $\{e_1,e_2\}$ is a pseudo-orthonormal frame on $M$ such that $\nabla e_1=\nabla e_2=0$. Thus, we have $\omega=0$.

 Let $e_3,e_4$ be the normal vector fields as \e{3.4}. Then $\{e_1,e_2,e_3,e_4\}$ is an adapted pseudo-orthonormal frame. Since $M$ is  minimal and Lorentzian,
it follows from  \e{2.4} and \e{3.2} that 
\begin{align} \label{4.2} & h(e_1,e_1)=\beta e_3+\gamma e_4,\; h(e_1,e_2)=0,\;\; h(e_2,e_2)=\lambda e_3+\mu e_4
\end{align} for some functions $\beta,\gamma,\lambda,\mu$. 

After applying Lemma \ref{L:3.2}  we find from \e{4.2} that 
\begin{equation}\begin{aligned}\label{4.3}& (\bar\nabla_{e_1}h)(e_1,e_2)=(\bar\nabla_{e_2}h)(e_1,e_2)=0,
\\& (\bar\nabla_{e_2}h)(e_1,e_1)=(\beta_y+\beta \Phi_2)e_3+(\gamma_y-\gamma \Phi_2)e_4,\\&   (\bar\nabla_{e_1}h)(e_2,e_2)=(\lambda_x+\lambda \Phi_1)e_3+(\mu_x-\mu \Phi_1)e_4.
\end{aligned}\end{equation}
On the other hand, it follows from \e{1.1} and \e{3.5} that
\begin{equation}\begin{aligned}\label{4.4}& (\tilde R(e_1,e_2)e_1)^\perp=3 c\sinh\alpha\cosh\alpha e_3,\\&  (\tilde R(e_1,e_2)e_2)^\perp=3 c\sinh \alpha\cosh\alpha e_4.
\end{aligned}\end{equation} 
Thus, by using \e{4.3}, \e{4.4}, we obtain from the equation of Codazzi  that
\begin{align}\label{4.5}& \beta_y=-\beta \Phi_2-3 c\sinh\alpha\cosh\alpha,\\&\label{4.6}
\gamma_y=\gamma \Phi_2,\;   \lambda_x=-\lambda \Phi_1,\\& \label{4.7} \mu_x=\mu \Phi_1+3 c\sinh\alpha\cosh\alpha.
\end{align}
Also, it follows from \e{4.2}, $\omega=0$ and Lemma \ref{L:3.2} that
\begin{align} & \label{4.8} A_{e_3}e_1=\gamma e_2,\; A_{e_3}e_2=\mu e_1,\; 
  A_{e_4}e_1=\beta e_2,\; A_{e_4}e_2=\lambda e_1,\\ \label{4.9} &\beta=-\alpha_x ,\;\; \mu=\alpha_y, 
\\&\label{4.10} \Phi_1=\alpha_x \tanh\alpha,\; \Phi_2=-\alpha_y \tanh\alpha.
\end{align} 
Substituting \e{4.9} and \e{4.10} into \e{4.5} and \e{4.7} gives
\begin{align}\label{4.11}& \alpha_{xy}=\alpha_x\alpha_y\tanh\alpha+3 c\sinh\alpha\cosh\alpha.\end{align} 

In views of \e{1.1}, \e{3.5}-\e{3.7}, \e{4.2}, and \e{4.8}, the equation of Gauss becomes \begin{align}\label{4.12}\gamma\lambda=\alpha_x\alpha_y+c(3\sinh^2\alpha-1).\end{align}

On the other hand, by applying \e{3.5} and \e{3.6}, we have
\begin{align}\label{4.13} \<\right.\hskip-.02in  \tilde R(e_1,e_2)e_3,e_4\hskip-.02in \left.\>=c(3\sinh^2\alpha+1).\end{align}
Using $\omega=0$, Lemma \ref{L:3.1} and \e{4.8}-\e{4.10}, we find
\begin{align}\label{4.14} \<R^D(e_1,e_2)e_3,e_4\>&=e_2\Phi_1-e_1\Phi_2\\& \notag=2\alpha_{xy}\tanh\alpha+2\alpha_x\alpha_y \sech^2\alpha,\\ \label{4.15} \<[A_{e_3},A_{e_4}]e_1,e_2\>&=\gamma \lambda+\alpha_x\alpha_y.\end{align}
Hence, in view of \e{4.9}, \e{4.13}, \e{4.14} and \e{4.15}, the equation of Ricci becomes
\begin{align}\label{4.16}2\alpha_{xy}\tanh\alpha+2\alpha_x\alpha_y \sech^2\alpha=\gamma\lambda=\alpha_x\alpha_y+c(3\sinh^2\alpha-1).\end{align} After applying \e{4.11}, the equation \e{4.16} of Ricci  can be simplified exactly as the equation \e{4.12} of Gauss.
\end{proof}

\section{Classification of minimal flat Lorentzian surfaces in ${\bf C}^2_1$}

Minimal flat Lagrangian surfaces in the Lorentzian complex plane ${\bf C}^2_1$ have been classified by B. Y. Chen and L.Vrancken in \cite{CV}. Clearly, Lagrangian surfaces in ${\bf C}^2_1$ are Lorentzian surfaces automatically. 
In this section we completely classify minimal flat Lorentzian surfaces in the Lorentzian complex plane ${\bf C}^2_1$.

\begin{theorem}  \label{T:5.1}  Let $\alpha(y)$ and $f(y)$ be two arbitrary differentiable functions of single variable defined on an open interval $I\ni 0$. Then
 \begin{equation}\begin{aligned}\notag & \psi(x,y)=\Bigg(x+\i f(y)+\frac{1}{2} \int_0^y \cosh^2\alpha dy-\int_0^y f'(y)\sinh\alpha dy,\\&\hskip.1in  x-y+\i f(y) +\frac{1}{2} \int_0^y \cosh^2\alpha dy-\int_0^y f'(y)\sinh\alpha dy-\i \int_0^y \sinh\alpha dy\Bigg)\end{aligned}\end{equation}  defines a minimal flat Lorentzian surface in  the Lorentzian complex plane ${\bf C}^2_1$ with $\alpha$ as its Wirtinger angle.

Conversely, every  minimal flat  Lorentzian surface in ${\bf C}^2_1$ is either an open portion of a totally geodesic Lorentzian plane or congruent to the Lorentzian surface described above. 
\end{theorem}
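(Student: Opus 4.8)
For the sufficiency (the ``Then \dots defines'' part) I would simply differentiate the displayed map. One finds $\psi_x=(1,1)$ and $\psi_y=(P_y,Q_y)$ with $Q_y-P_y=-1-\i\sinh\alpha$, and a direct computation with the Hermitian form $b_{1,2}$ gives $\langle\psi_x,\psi_x\rangle=\langle\psi_y,\psi_y\rangle=0$ and $\langle\psi_x,\psi_y\rangle=-1$; hence the induced metric is the flat Lorentzian metric $g_o$ of \e{4.1}. Because $\psi_x$ is constant, $\psi_{xx}=\psi_{xy}=0$, so $h(e_1,e_1)=h(e_1,e_2)=0$ for $e_1=\psi_x,\ e_2=\psi_y$; since the trace of $h$ with respect to $g_o$ equals $-2h(e_1,e_2)$, the surface is minimal. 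Finally, decomposing $J\psi_x=\i\psi_x$ into its tangential and normal parts in the frame $\{e_1,e_2\}$ yields $Pe_1=\sinh\alpha\,e_1$, so $\alpha$ is the Wirtinger angle by \e{3.3}.

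For the converse I would argue as in the proof of Theorem \ref{P:4.1}. Flatness lets me take $M$ to be an open subset of $\mathbb E^2_1$ with metric $g_o$, $e_1=\partial/\partial x$, $e_2=\partial/\partial y$ and $\omega=0$, while minimality forces $h(e_1,e_2)=0$, so \e{4.2} holds with $\beta=-\alpha_x$, $\mu=\alpha_y$. Setting $c=0$ in the formulas of Section 4 leaves the system
\[
\alpha_{xy}=\alpha_x\alpha_y\tanh\alpha,\quad \gamma\lambda=\alpha_x\alpha_y,\quad \gamma_y=-\gamma\alpha_y\tanh\alpha,\quad \lambda_x=-\lambda\alpha_x\tanh\alpha .
\]
Integrating the two Codazzi equations gives $\gamma\cosh\alpha=A(x)$ and $\lambda\cosh\alpha=B(y)$, while \e{4.11} yields $\alpha_x=H(x)\cosh\alpha$ and $\alpha_y=G(y)\cosh\alpha$.

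The crux is to prove $\alpha_x\alpha_y\equiv0$. Differentiating \e{4.11} gives the identity $\partial_x\partial_y\ln\cosh\alpha=\alpha_x\alpha_y(\sech^2\alpha+\tanh^2\alpha)=\alpha_x\alpha_y$. On any open set where $\alpha_x\alpha_y\neq0$, the Gauss equation forces $AB=\alpha_x\alpha_y\cosh^2\alpha=HG\cosh^4\alpha$, so $\cosh^4\alpha$ is a product of a function of $x$ and a function of $y$; then $\partial_x\partial_y\ln\cosh\alpha=0$, contradicting the identity, so such a set is empty. Thus $\alpha_x\alpha_y\equiv0$, and, working on a connected neighborhood and interchanging the two null directions if necessary, I may assume $\alpha=\alpha(y)$, whence $\beta=0$ and $\gamma\lambda=0$. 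The most delicate point is to eliminate the branch $\lambda\equiv0,\ \gamma\not\equiv0$: there one integrates $\psi_{yy}=\alpha_y e_4$ to get $e_2(y)$, shows that $\psi_{xx}=\gamma e_4$ is a constant multiple of a fixed vector so that $\psi_x=e_1(0)+\i\,a(x)\,e_2(0)$, and then uses $\langle e_1,e_1\rangle=0$ to conclude $a(x)\sinh\alpha\equiv0$; hence $\gamma\equiv0$ unless $\alpha\equiv0$, the Lagrangian case reducing to $h(e_1,e_1)=0$ by one further interchange. After these reductions $h(e_1,e_1)=0$; if in addition $h(e_2,e_2)=0$ the surface is totally geodesic, the first alternative.

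It remains to integrate when $h(e_1,e_1)=0$ and $\alpha=\alpha(y)$. Then $\psi_{xx}=\psi_{xy}=0$, so $\psi=\Psi(x)+\Xi(y)$ with $\Psi'$ constant; normalizing the pseudo-orthonormal and adapted conditions \e{3.2}, \e{3.5}--\e{3.7} fixes $\Psi'(x)=(1,1)$ up to a holomorphic isometry of ${\bf C}^2_1$. Writing the normal frame through $J=\i$ as $e_3=(\i\sech\alpha-\tanh\alpha)e_1$ and $e_4=(\i\sech\alpha+\tanh\alpha)e_2$, the Gauss formula $\Xi''(y)=\lambda e_3+\alpha_y e_4$ becomes a first-order linear ODE for $\Xi'(y)=e_2(y)$; its homogeneous part integrates to $\cosh\alpha\,e^{\i\theta}$ with $\theta'=\sech\alpha\,\alpha_y$, and the inhomogeneous part carries the remaining free function $\lambda(y)$, which on one further integration produces the function $f(y)$. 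Matching $\Xi$ against the displayed formula and absorbing the integration constants into a holomorphic isometry gives the second alternative. The principal obstacles are the reduction to $h(e_1,e_1)=0$ --- i.e. the case analysis built on $\alpha_x\alpha_y=0$ and $\gamma\lambda=0$ --- and the bookkeeping in this last integration needed to recover the precise form of $\psi$.
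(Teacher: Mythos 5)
Your treatment of the direct statement is fine, and your reduction of the converse up to the dichotomy $\gamma\lambda=0$ is actually cleaner than the paper's: where the paper disposes of its Cases (B) and (C) by grinding through the compatibility list \e{5.26}--\e{5.33}, you combine the separated forms $\gamma\cosh\alpha=A(x)$, $\lambda\cosh\alpha=B(y)$, $\alpha_x=H(x)\cosh\alpha$, $\alpha_y=G(y)\cosh\alpha$ with the identity $\partial_x\partial_y\ln\cosh\alpha=\alpha_x\alpha_y$ (which rests on $\tanh^2\alpha+\sech^2\alpha=1$) to force $\alpha_x\alpha_y\equiv0$ outright. That step checks out and is a genuine economy over the paper's route.

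The gap is in your elimination of the branch $\lambda\equiv0$, $\gamma\not\equiv0$ (the paper's Case (A.2)), and it is fatal. By \e{3.5} one has $e_4=(\i\sech\alpha+\tanh\alpha)e_2$, not $\i e_2$; since $e_4$ is a \emph{null normal} vector, $\langle e_4,e_4\rangle=\langle e_4,e_1\rangle=0$ by \e{3.7}, so in the correct decomposition $\psi_x=e_1(0)+a(x)e_4(0)$ the requirement $\langle e_1,e_1\rangle=0$ holds identically and your conclusion $a(x)\sinh\alpha\equiv0$ does not follow. Worse, this branch cannot be eliminated by any argument: the map $\psi(x,y)=\bigl(\tfrac{x^2}{2}-\tfrac{\i x}{2}+\cosh y-\i y,\ \tfrac{x^2}{2}+\tfrac{\i x}{2}+\cosh y-\i y\bigr)$ satisfies $\langle\psi_x,\psi_x\rangle=\langle\psi_y,\psi_y\rangle=0$, $\langle\psi_x,\psi_y\rangle=-1$, $\psi_{xy}=0$, while $\psi_{xx}=(1,1)$ and $\psi_{yy}=\cosh y\,(1,1)$ are both normal and nowhere zero; it is therefore a minimal flat Lorentzian surface, not totally geodesic, and not congruent to any member of the displayed family, all of whose members have $h(e_1,e_1)=0$ in one of the two canonical null directions. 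So the converse as stated cannot be completed. You should know that the paper's own Case (A.2) suffers from the same defect: in passing from \e{5.16} to \e{5.18} the $\tanh\alpha$ terms are silently dropped (and the real function $\mu$ is equated with a complex expression in \e{5.17}), which is legitimate only when $\alpha\equiv0$; with the correct $e_4$ the compatibility condition $(\psi_{xx})_y=0$ is satisfied identically precisely because $\mu=\alpha_y$. The full classification must admit translation surfaces $\psi=z(x)+w(y)$ of two null curves with $\langle z',w'\rangle\equiv-1$, of which the theorem's family is the special case $z(x)=(x,x)$; this is presumably what the later reference \cite{c3} corrects.
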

\begin{proof} It is straight-forward to show that the mapping $\psi$ defined in the theorem gives rise to a minimal flat Lorentzian surface in ${\bf C}^2_1$.

Conversely, assume that $M$ is a minimal flat Lorentzian  surface in ${\bf C}^2_1$. If the second fundamental form  vanishes identically, then $M$ is an open portion of a totally geodesic  Lorentzian  plane. 
So, we assume from now on that $M$ is a non-totally geodesic minimal flat Lorentzian surface in ${\bf C}^2_1$. 

Since $M$ is flat, we may assume that as before that $M$ is  an open connected subset  of  $\mathbb E^2_1$ equipped with the Lorentzian metric tensor:
\begin{align}\label{5.1} g_o=-dx\otimes dy-dy\otimes dx.\end{align}

Put $e_1=\partial/\partial x,e_2=\partial/\partial y$. Then $\{e_1,e_2\}$ is a pseudo-orthonormal frame on $M$ such that $\nabla e_1=\nabla e_2=0$. Thus, we have $\omega=0$.

 Let $e_3,e_4$ be the normal vector fields defined by \e{3.4}.  Since $M$ is a minimal Lorentzian surface,
we have
\begin{align} \label{5.2} & h(e_1,e_1)=\beta e_3+\gamma e_4,\; h(e_1,e_2)=0,\;\; h(e_2,e_2)=\lambda e_3+\mu e_4,
\end{align} for some functions $\beta,\gamma,\lambda,\mu$. By applying \e{3.7}, \e{5.2} and the equation of Gauss, we find \begin{align}\label{5.3}\gamma\lambda=-\beta \mu.\end{align}
\vskip.05in

{\it Case} (A): $\beta=0$ {\it on} $M$. From \e{5.3}, we get $\gamma\lambda=0$.
\vskip.05in

{\it Case} (A.1): $\gamma=0$  {\it on} $M$. In this case, \e{5.2} reduces to
 \begin{align} \label{5.4} & h(e_1,e_1)= h(e_1,e_2)=0,\;\; h(e_2,e_2)=\lambda e_3+\mu e_4.
\end{align} 
Since $M$ is not totally geodesic, at least one of  $\lambda,\mu$ is a nonzero function. 
Now, by applying the equation of Codazzi, we find from \e{5.4} that
\begin{align} \label{5.5} & \lambda_x=-\lambda \Phi_1,\;\; \mu_x=\mu \Phi_1.
\end{align}

On the other hand, it follows from $\omega=0$, \e{5.4}, and Lemma \ref{L:3.2} that \begin{align} \label{5.6} &\alpha_x=0,\;\; \alpha_y=\mu=-\Phi_2 \coth\alpha ,\;\;   \Phi_1=0.\end{align}
From the first two equations in \e{5.6}, we get $\alpha=\alpha(y)$ and $\mu=\alpha'(y)$. Also, from \e{5.5} and the last equation in \e{5.6}, we have  $\lambda=\lambda(y)$ and $\mu=\mu(y)$.
Therefore, after applying \e{3.5}, \e{5.4} and the formula of Gauss, we know that the immersion of the surface in ${\bf C}^2_1$ satisfies
\begin{equation}\begin{aligned} \label{5.7} & \psi_{xx}=\psi_{xy}=0,\\& \psi_{yy}=\lambda(y)(\i \sech\alpha-\tanh\alpha)\psi_x+\alpha''(y) (\i \sech\alpha+\tanh\alpha)\psi_y.
\end{aligned} \end{equation}
Solving the first two equations of  \e{5.7}  shows that the immersion is given by
\begin{align} \label{5.8} & \psi=c_1 x+B(y)
\end{align} 
for some vector $c_1\in {\bf C}^2_1$ and ${\bf C}^2_1$-valued function $B(y)$. Thus, by applying  \e{5.1} and $\<\i \psi_x,\psi_y\>=-\sinh\alpha$, we may  find from \e{3.2} and \e{3.5} that
\begin{align} \label{5.9} & \<c_1,c_1\>=0,\;\; \<c_1,B'\>=-1,\;\; \<\i c_1, B'\>=-\sinh\alpha,\\&\label{15.10} \<B',B'\>=0.
\end{align} 
Without loss of generality, we may put \begin{align} \label{5.10} &c_1=(1,1),\;\; B(y)=(k(y)+\i f(y), u(y)+\i v(y)).\end{align} Now, by applying conditions in \e{5.9} and \e{5.10}, we obtain 
\begin{equation}\begin{aligned}&\label{15.12} u=k-y+a_1,\;\; v=f-\int_0^y \sinh\alpha dy+a_2\end{aligned} \end{equation}
for some real numbers $a_1,a_2$. From \e{15.10} and \e{15.10}, we find
\begin{equation}\begin{aligned}&\label{15.13}  k=\frac{1}{2}\int_0^y\cosh^2\alpha dy-\int_0^y f'(y)\sinh\alpha dy+a_3\end{aligned} \end{equation} for some real number $a_3$.

By combining \e{5.8}, \e{5.10}, \e{15.12} and \e{15.13} we know that  the immersion is congruent to the one described in the theorem.

\vskip.05in

{\it Case} (A.2): $\lambda=0$  {\it and} $\gamma\ne 0$ {\it on some open subset $U\subset M$}. Let us work on $U$. From \e{5.2} we have\begin{align} \label{5.11} & h(e_1,e_1)=\gamma e_4,\; h(e_1,e_2)=0,\;\; h(e_2,e_2)=\mu e_4.
\end{align} Thus, the equation of Codazzi yields
\begin{align} \label{5.12} & \gamma_y=\gamma \Phi_2,\;\; \mu_x=\mu \Phi_1.\end{align}

When $\mu=0$, this  reduces to case (A.1) after interchanging $x$ and $y$. So, we assume that $\mu\ne 0$. Hence \e{5.12} gives \begin{align} \label{5.13} & (\ln\gamma)_y= \Phi_2,\;\; (\ln \mu)_x= \Phi_1.\end{align}
It follows from \e{3.9} of Lemma \ref{3.2} that the normal curvature tensor $R^D$ satisfies
\begin{equation}\begin{aligned} \label{5.14}   \<R^D(e_1,e_2)e_3,e_4\>&=\<D_{e_1}(\Phi_2 e_3)-D_{e_2}(\Phi_1 e_3),e_4\>\\&=e_2\Phi_1-e_1\Phi_2\\&=(\ln \mu)_{xy}-(\ln \gamma)_{xy}.\end{aligned} \end{equation}

On the other hand,  from \e{3.13} of Lemma \ref{L:3.2} and \e{5.11} we get $A_{e_4}=0$. Thus, by combining these with the equation of Ricci, we obtain
$(\ln \gamma)_{xy}=(\ln \mu)_{xy}.$ Consequently, we have
\begin{align} \label{5.15} & \gamma= (f(x)+k(y))\mu\end{align}
for some real-valued functions $f(x),k(y)$. Therefore, after applying \e{3.5}, \e{5.11} and the formula of Gauss, we know that the immersion  satisfies
\begin{equation}\begin{aligned} \label{5.16} & \psi_{xx}=(f(x)+k(y))\mu (\i \sech\alpha-\tanh\alpha)\psi_y, \;\;  \\& \psi_{xy}=0,\\& \psi_{yy}=\mu (\i \sech\alpha-\tanh\alpha)\psi_y.
\end{aligned} \end{equation}
It follows from $(\psi_{yy})_x=(\psi_{xy})_y=0$ that $$\mu_x=-\i \mu \alpha_x \sech\alpha.$$ Hence, we have
\begin{align} \label{5.17} & \mu=\phi (y)e^{-2\i \tan^{-1}(\tanh \alpha/2)}\end{align}
for some nonzero real-valued function $\phi(y)$. Substituting this into \e{5.15} gives
\begin{equation}\begin{aligned} \label{5.18} & \psi_{xx}=\i \phi(y) (f(x)+k(y))\psi_y, \;\;  \\& \psi_{xy}=0,\;\;  \psi_{yy}=\i \phi(y)\psi_y.\end{aligned} \end{equation}
Now, it follows from $(\psi_{xx})_y=(\psi_{xy})_x=0$ and \e{5.18} that
\begin{align} \label{5.19} & \i[ (\phi(y) k'(y)+(f(x)+k(y))\phi'(y)]=(f(x)+k(y))\phi^2(y).\end{align}
Since $\phi, f, k$ are real-valued, \e{5.19} implies that $(f(x)+k(y))\phi(y)$=0. But this is impossible, since $\gamma$ and $\mu$ are nonzero functions. Thus, this case cannot occur.

\vskip.05in

{\it Case} (B): $\gamma=0$ {\it and} $\beta\ne 0$  {\it on some open subset $V\subset M$}. Let us work on $V$. It follows from \e{5.3} that $\mu=0$. Hence, \e{5.2} reduces to 
\begin{align} \label{5.20} & h(e_1,e_1)=\beta e_3,\; h(e_1,e_2)=0,\;\; h(e_2,e_2)=\lambda e_3.
\end{align}
But this case is also impossible after applying a similar argument as case (A.2).

\vskip.05in

{\it Case} (C): $\beta ,\gamma,\lambda,$ {\it and} $\mu$ {\it are nonzero}  {\it on some open subset $W\subset M$}. Let us work on $W$. It follows from \e{5.2}, $\omega=0$, Lemma \ref{L:3.1}, and the equation of Codazzi that
\begin{align} \label{5.21} & (\ln \beta)_y=-\Phi_2,\; (\ln\gamma)_y=\Phi_2,\;  (\ln \lambda)_x=-\Phi_1,\; (\ln \mu)_x=\Phi_1,\end{align}
which imply that \begin{align} \label{5.22} &  \beta\gamma=\varphi(x),\;\; \lambda\mu=\eta(y)\end{align}
for some nonzero real-valued functions $\varphi(x),\eta(y)$. Hence, \e{5.2} becomes 
\begin{equation}\begin{aligned}  \label{5.23} & h(e_1,e_1)=\beta e_3+\frac{\varphi(x)}{\beta} e_4,\; \\& h(e_1,e_2)=0,\;\; \\& h(e_2,e_2)=\frac{\eta(y)}{\mu}e_3+\mu e_4.\end{aligned} \end{equation}
Since the surface is flat, \e{5.23} and the equation of Gauss gives
\begin{align} \label{5.24} & \beta^2\mu^2=-\varphi(x)\eta(y).\end{align}
By applying \e{3.5}, \e{5.23} and the formula of Gauss, we know that the immersion satisfies
\begin{equation}\begin{aligned} \label{5.25} & \psi_{xx}=\beta (\i \sech\alpha-\tanh\alpha)\psi_x+\frac{\varphi(x)}{\beta} (\i \sech\alpha+\tanh\alpha)\psi_y , \;\;  \\& \psi_{xy}=0,\\& \psi_{yy}=\frac{\eta(y)}{\mu} (\i \sech\alpha-\tanh\alpha)\psi_x+\mu (\i \sech\alpha+\tanh\alpha)\psi_y.
\end{aligned} \end{equation}

The compatibility conditions of  system \e{5.25} are given by
\begin{align} \label{5.26} &\mu \beta \beta_y=- \varphi(x)\eta(y) \tanh\alpha,
\\& \label{5.27}\mu=\alpha_y,
\\& \label{5.28} \beta^2\mu \alpha_y=-\varphi(x)\eta(y),
\\& \label{5.29} \beta_y=\beta\mu \tanh\alpha,
\\& \label{5.30}(\sinh2\alpha) \mu_x-2\mu \alpha_x=(3-\cosh 2\alpha)\beta\mu,
\\& \label{5.31}\mu_x+\mu \alpha_x\tanh\alpha=-2\beta\mu\tanh\alpha,
\\& \label{5.32}\beta\mu ^2\alpha_x=\varphi(x)\eta(y),
\\& \label{5.33}\beta \mu \mu_x=\varphi(x)\eta(y)\tanh\alpha
.\end{align}

Form  \e{5.27}, \e{5.28}, and \e{5.32}, we get 
\begin{align} \label{5.34} &\beta=-\alpha_x,\;\; \mu=\alpha_y.\end{align}
Thus,  \e{5.28}, \e{5.31} and \e{5.34} imply that
\begin{align} & \label{5.35} \alpha_x^2 \alpha_y^2=-\varphi(x)\eta(y),
\\& \label{5.36}\alpha_{x y} =\alpha_x\alpha_y  \tanh\alpha.\end{align}

Solving \e{5.36} yields
\begin{align} \label{5.37} &\alpha=2 \tanh^{-1}(\tan(f(x)+k(y)))\end{align}
for some functions $f(x),k(y)$. Since $\varphi(x)\eta(y)\ne 0$, \e{5.35} shows that $\alpha$ is a non-constant function. Hence, $f(x)+k(y)$ is also non-constant.

Substituting \e{5.37} into \e{5.35} gives
\begin{align} \label{5.38} &16 \text{\small$ \(\frac{f'(x)^2}{\varphi(x)}\)\( \frac{ k'(y)^2}{\eta(y)}\)$}=-\cos^4 (\tan(2f(x)+2k(y))).\end{align}
It follows from \e{5.38}  that at least one of $f(x),k(y)$ is a constant function. But this is impossible, since it leads to $$\cos^4 (\tan(2f(x)+2k(y)))=0.$$ Consequently, this case also cannot occur.
\end{proof}
 
 The following result is  a special case of Theorem \ref{T:5.1}.
 
 \begin{corollary}  \label{C:5.1}  Every  minimal flat  $\theta$-slant surface in ${\bf C}^2_1$ is either an open portion of a totally geodesic slant plane or congruent to the surface defined by 
  \begin{equation}\begin{aligned}\notag & \psi(x,y)=\Bigg(x+\frac{y}{2}  \cosh^2\theta+(\i -\sinh\theta ) f(y),
  \\&\hskip.1in  x-y+\frac{y}{2} \cosh^2\theta+(\i - \sinh\theta ) f(y)-\i y\sinh\theta  \Bigg)\end{aligned}\end{equation} 
for some  function $f(y)$.  
\end{corollary}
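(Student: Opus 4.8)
The plan is to obtain Corollary \ref{C:5.1} directly from Theorem \ref{T:5.1} by specializing the arbitrary single-variable function $\alpha(y)$ to a constant. By definition a $\theta$-slant surface is a Lorentzian surface whose Wirtinger angle is the constant $\theta$, so its Wirtinger angle function is $\alpha\equiv\theta$. Theorem \ref{T:5.1} asserts that every minimal flat Lorentzian surface in ${\bf C}^2_1$ is either an open portion of a totally geodesic Lorentzian plane or congruent to the explicit surface $\psi$ whose Wirtinger angle equals the prescribed $\alpha$. Since a congruence of ${\bf C}^2_1$ preserves $J$, and hence the Wirtinger angle, the $\theta$-slant hypothesis forces $\alpha=\theta$ in that dichotomy; thus only two cases remain to be recorded.

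First I would treat the totally geodesic case. A totally geodesic Lorentzian surface is an open part of an affine real $2$-plane in ${\bf C}^2_1$, along which the parallel complex structure $J$ is constant; consequently the Wirtinger angle of such a plane is automatically constant, so the plane is itself slant. The $\theta$-slant assumption identifies this constant with $\theta$, which is exactly an open portion of a totally geodesic slant plane.

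For the non-totally geodesic case I would substitute $\alpha(y)\equiv\theta$ into the formula of Theorem \ref{T:5.1} and evaluate the three integrals, all of which are now elementary: $\frac{1}{2}\int_0^y\cosh^2\alpha\,dy=\frac{y}{2}\cosh^2\theta$, $\int_0^y\sinh\alpha\,dy=y\sinh\theta$, and $\int_0^y f'(y)\sinh\alpha\,dy=\sinh\theta\,(f(y)-f(0))$. Collecting the resulting terms componentwise and writing $(\i-\sinh\theta)f(y)$ for the combination $\i f(y)-\sinh\theta\,f(y)$ reproduces precisely the two components displayed in the corollary.

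The only genuine bookkeeping point---and the step I expect to be the sole (minor) obstacle---is the constant $-\sinh\theta\,f(0)$ arising from the lower limit in $\int_0^y f'(y)\sinh\alpha\,dy$. This constant enters additively and identically in both components of $\psi$, so it represents a translation of the surface in ${\bf C}^2_1$; absorbing it by that translation (equivalently, by replacing $f$ with $f-f(0)$) yields the stated form for some function $f$. Everything else is the verbatim specialization of Theorem \ref{T:5.1}, using only that constancy of the Wirtinger angle is the defining property of a slant surface.
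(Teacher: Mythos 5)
Your proposal is correct and is exactly the route the paper intends: the paper offers no separate argument for Corollary \ref{C:5.1} beyond calling it a special case of Theorem \ref{T:5.1}, and your specialization $\alpha\equiv\theta$, evaluation of the three integrals, and absorption of the constant $\sinh\theta\,f(0)$ by a translation (equivalently replacing $f$ by $f-f(0)$) supplies the verification the paper leaves implicit.
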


\begin{remark} When $\theta=0$, Corollary \ref{C:5.1} reduces to a result of \cite{CV}. \end{remark}

\begin{remark} If $\alpha(y)$ and $f(y)$ are functions defined on the entire real line, then the minimal flat Lorentzian surface defined in Theorem 5.1 is a complete surface.  Consequently, there exist infinitely many complete minimal flat Lorentzian surfaces in ${\bf C}^2_1$. Moreover, Corollary 5.1 shows that there exist infinitely many complete minimal flat slant surfaces in ${\bf C}^2_1$. \end{remark}

\section{Classification of minimal flat slant surfaces in $CP^2_1(4)$}

The following lemma follows easily from the proof of Theorem \ref{P:4.1}.

\begin{lemma} The only minimal flat slant surfaces in  a Lorentzian complex space form $\tilde M^2_1(4c)$ with $c\ne 0$ are the Lagrangian ones.\end{lemma}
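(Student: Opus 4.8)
The plan is to extract this lemma directly from equation \e{4.11}, which was already established in the proof of Theorem \ref{P:4.1}. First I would recall that that derivation applies verbatim to \emph{any} minimal flat Lorentzian surface $M$ in $\tilde M^2_1(4c)$: since $M$ is flat and Lorentzian, I identify it with an open subset of $\mathbb E^2_1$ carrying the metric $g_o=-dx\otimes dy-dy\otimes dx$, set $e_1=\partial/\partial x$ and $e_2=\partial/\partial y$, and pass to the associated adapted pseudo-orthonormal frame. Minimality then gives the normal form \e{4.2} for $h$, and feeding this into the Codazzi equation together with Lemma \ref{L:3.2} produces the compatibility identity \e{4.11}, namely $\alpha_{xy}=\alpha_x\alpha_y\tanh\alpha+3c\sinh\alpha\cosh\alpha$.

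The decisive step is then to impose the slant hypothesis. By the definition recalled after \e{3.3}, a slant surface has \emph{constant} Wirtinger angle $\alpha$, so $\alpha_x=\alpha_y=0$; in particular both $\alpha_{xy}$ and the product $\alpha_x\alpha_y$ vanish. Substituting these vanishings into \e{4.11} annihilates every term except the last, leaving the purely algebraic relation $3c\sinh\alpha\cosh\alpha=0$. Since $c\neq 0$ and $\cosh\alpha\geq 1>0$ at every point, this forces $\sinh\alpha=0$, and hence $\alpha=0$. As an $\alpha$-slant surface is Lagrangian precisely when $\alpha=0$, I conclude that $M$ is Lagrangian, which is the assertion.

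I expect no genuine obstacle here, as all the analytic work is already contained in the derivation of \e{4.11}; the argument is a one-line specialization. The only point meriting brief care is to confirm that the flat-coordinate normalization and the identity \e{4.11} remain valid for a slant surface. This is immediate: slantness merely adjoins the constraint that $\alpha$ be constant to the minimal-flat-Lorentzian hypotheses already in force when \e{4.11} was obtained, so it neither obstructs the construction of the adapted frame nor alters \e{4.11}. Hence the lemma indeed follows easily from the proof of Theorem \ref{P:4.1}.
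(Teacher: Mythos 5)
Your proposal is correct and follows exactly the paper's own argument: the paper likewise specializes the identity \eqref{4.11} from the proof of Theorem \ref{P:4.1} to a constant Wirtinger angle, obtains $\sinh\alpha\cosh\alpha=0$, and concludes $\alpha=0$, i.e.\ the surface is Lagrangian. No differences worth noting.
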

\begin{proof} Let $M$ be a minimal flat  Lorentzian slant surface in $\tilde M^2_1(4c)$ with $c\ne 0$. Then $\alpha$ is constant. Thus \e{4.11} implies that $\sinh \alpha\cosh \alpha=0$, which is impossible unless $\alpha=0$, i.e., $M$ is Lagrangian.
\end{proof}

The following theorem completely classifies minimal flat  slant surfaces in $CP^2_1(4)$.

\begin{theorem}  \label{T:6.1} If $L:M\to CP^2_1(4)$ is a  minimal flat slant surface in the Lorentzian complex projective plane $CP^2_1(4)$, then $L$ is Lagrangian. Moreover, the immersion is congruent to $\pi\circ \tilde L$, where 
\begin{equation}\begin{aligned} \label{6.1} & \tilde L(x,y)=\text{\small$ \frac{1}{\sqrt{3}}$}\Bigg(\sqrt{2}e^{\frac{\i }{2a}(x-a^2y)}  \cosh\(\text{\small$\frac{\sqrt{3}}{2a}$}(x+a^2y)\)  ,e^{\frac{\i}{a} (a^2y-x)}, \\&\hskip1.3in\sqrt{2}e^{\frac{\i }{2a}(x-a^2y)} \sinh\(\text{\small$\frac{\sqrt{3}}{2a}$}(x+a^2y)\)\Bigg),\end{aligned}\end{equation} $a$ is a nonzero real number and $\pi:S^5_2(1)\to CP^2_1(4)$ is the Hopf fibration.
 \end{theorem}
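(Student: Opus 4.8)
The first assertion is immediate from the preceding Lemma: since $CP^2_1(4)$ is a Lorentzian complex space form with $c=1\neq 0$, that Lemma forces the Wirtinger angle $\alpha\equiv 0$, so $L$ is Lagrangian. To obtain the explicit normal form, I would re-run the set-up in the proof of Theorem \ref{P:4.1}. Flatness lets me regard $M$ as an open subset of $\mathbb E^2_1$ with metric $g_o=-dx\otimes dy-dy\otimes dx$, $e_1=\partial/\partial x$, $e_2=\partial/\partial y$, $\omega=0$, together with an adapted frame $\{e_1,e_2,e_3,e_4\}$. Putting $\alpha=0$ in \e{4.9} and \e{4.10} gives $\beta=\mu=0$ and $\Phi_1=\Phi_2=0$, so that $h(e_1,e_1)=\gamma e_3$-type reduces to $h(e_1,e_1)=\gamma e_4$, $h(e_1,e_2)=0$, $h(e_2,e_2)=\lambda e_3$. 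The Gauss equation \e{4.12} then becomes $\gamma\lambda=-1$, while the Codazzi equations \e{4.5}--\e{4.7} with $\Phi=0$ give $\gamma_y=\lambda_x=0$. Hence $\gamma=\gamma(x)$ and $\lambda=\lambda(y)$, and $\gamma\lambda=-1$ forces both to be nonzero constants; using the residual freedom $x\mapsto sx,\ y\mapsto y/s$ (which preserves $g_o$) I may write $\gamma=1/a^3$ for a suitable nonzero real number $a$.

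Next I would pass to the Hopf fibration $\pi\colon S^5_2(1)\to CP^2_1(4)$ and work with a horizontal lift $\psi=\tilde L\colon M\to S^5_2(1)\subset{\bf C}^3_1$, so that $\psi_x,\psi_y$ are the horizontal lifts of $e_1,e_2$ and $\i\psi_x,\i\psi_y$ are the horizontal lifts of $e_3=Je_1$, $e_4=Je_2$. Combining the flat-connection relation for the sphere (whose second fundamental form contributes $-\<e_i,e_j\>\psi$), the Gauss formula in $CP^2_1(4)$ with $\nabla e_i=0$, and the fact that the O'Neill vertical term (proportional to $\<Je_i,e_j\>$) vanishes because $M$ is Lagrangian, I obtain the linear system
\begin{align*}
\psi_{xx}=\i\gamma\,\psi_y,\qquad \psi_{xy}=\psi,\qquad \psi_{yy}=\i\lambda\,\psi_x,\qquad \gamma\lambda=-1.
\end{align*}
A direct computation shows this system is compatible precisely when $\gamma\lambda=-1$, and that the geometric side conditions $b_{1,3}(\psi,\psi)=1$, the horizontality $\<\psi_x,\i\psi\>=\<\psi_y,\i\psi\>=0$, and the frame conditions $\<\psi_x,\psi_x\>=\<\psi_y,\psi_y\>=0$, $\<\psi_x,\psi_y\>=-1$ are all preserved by the flow, hence reduce to algebraic constraints at a single base point.

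To integrate the system I would use the exponential ansatz $\psi=\sum_{k=1}^3 e^{p_kx+q_ky}c_k$ with constant vectors $c_k\in{\bf C}^3_1$. Substitution yields $p_kq_k=1$ and $p_k^3=\i\gamma=\i/a^3$, so the $p_k$ are the three cube roots $e^{\i\pi/6}/a,\ e^{5\i\pi/6}/a,\ -\i/a$, with $q_k=1/p_k$. The two roots $e^{\i\pi/6}/a$ and $e^{5\i\pi/6}/a$ share the common factor $e^{\frac{\i}{2a}(x-a^2y)}$, and recombining their exponentials into hyperbolic functions reproduces exactly the $\cosh$ and $\sinh$ terms with argument $\tfrac{\sqrt3}{2a}(x+a^2y)$ appearing in \e{6.1}, while the root $-\i/a$ produces the middle component $e^{\frac{\i}{a}(a^2y-x)}$. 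This already identifies the three components of $\tilde L$ up to the constant vectors $c_k$.

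Finally I would determine the $c_k$. Imposing $b_{1,3}(\psi,\psi)=1$, horizontality, and the frame conditions at the base point pins down the Gram matrix $\{b_{1,3}(c_j,c_k)\}$; since the group of $b_{1,3}$-preserving complex linear isometries of ${\bf C}^3_1$ (i.e.\ $U(1,2)$) acts transitively on admissible initial data and descends through $\pi$ to the holomorphic isometries of $CP^2_1(4)$, I may normalize the $c_k$ to standard values, yielding precisely the map $\tilde L$ of \e{6.1} up to congruence. The main obstacle is exactly this last step: carefully extracting the algebraic constraints on the $c_k$ in the indefinite form $b_{1,3}$ and verifying that they determine $\{c_k\}$ uniquely modulo $U(1,2)$, so that every solution is congruent to \e{6.1}. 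By contrast, once the exponential ansatz and the normalization $\gamma=1/a^3$ are in place, the solution of the PDE system and the verification that $\tilde L$ is minimal, flat and Lagrangian are routine.
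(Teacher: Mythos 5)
Your proposal is correct and follows essentially the same route as the paper: invoke the preceding Lemma to get $\alpha\equiv 0$, reduce to constant $\gamma,\lambda$ with $\gamma\lambda=-1$ via Gauss--Codazzi, and integrate the horizontal-lift system $\tilde L_{xx}=\i\gamma\tilde L_y$, $\tilde L_{xy}=\tilde L$, $\tilde L_{yy}=\i\lambda\tilde L_x$, fixing the constant vectors by congruence. The only (inessential) difference is that you integrate via a global exponential ansatz $\sum_k e^{p_kx+q_ky}c_k$ with $p_k^3=\i/a^3$, $q_k=1/p_k$, whereas the paper first solves the third-order ODE $a^3\tilde L_{xxx}=\i\tilde L$ in $x$ and then the resulting ODEs in $y$; both yield the same general solution and the same normal form \eqref{6.1}.
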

\begin{proof}  Let $L:M\to CP^2_1(4)$ be a minimal flat  slant surface in  $CP^2_1(4)$. Then $L$ is Lagrangian according to Lemma 6.1. 

As in the proof of Theorem \ref{P:4.1},  we may assume that $M$ is  an open connected subset  of  $\mathbb E^2_1$  with
\begin{align}\label{6.2} g_o=-dx\otimes dy-dy\otimes dx.\end{align}

Let $e_1,e_2,e_3,e_4$ be as in the proof of Theorem \ref{P:4.1}. Then  we have $$\beta=\mu =\omega=\Phi=0.$$ Thus, we see from \e{4.5}, \e{4.7}, \e{4.9} and \e{4.10} that $\gamma$ and $\lambda$ are nonzero real numbers satisfying $\gamma\lambda=-1$. Hence, if we put $\lambda=-a^3$, then \e{4.2} reduces to
\begin{align} \label{6.3} & h(e_1,e_1)=\frac{Je_2}{a^3} ,\; h(e_1,e_2)=0,\;\; h(e_2,e_2)=-a^3 Je_1. 
\end{align}
Therefore, if $\tilde L:M\to S^5_2(1)$ is a horizontal lift of $L$ (cf. \cite{R}), then we have
\begin{equation}\begin{aligned} \label{6.4} \tilde L_{xx}=\frac{\i}{a^3} L_y,\;\; \tilde L_{xy}=\tilde L,\;\; \tilde L_{yy}=-\i a^3  L_x.
\end{aligned}\end{equation}
It follows from the first two equations in \e{6.4} that $$a^3\tilde L_{xxx}=\i \tilde L.$$ Solving this equation gives
\begin{equation}\begin{aligned} \label{6.5} & \tilde L=e^{\i x/(2a)}B(y)\(\cosh \(\text{\small$\frac{\sqrt{3}x}{2a}$}\) B(y)+\sinh \(\text{\small$ \frac{\sqrt{3}x}{2a}$}\)  C(y)\)   \\& \hskip.9in +e^{-\i x/a} A(y)
\end{aligned}\end{equation} for some functions $A(y),B(y),C(y)$. Substituting this into the first equation in \e{6.4} gives
\begin{align} \label{6.6} & A'(y)=\i a A(y),\\&\label{6.7} 2 B'(y)+\i a B(y)=\sqrt{3}a C(y),\\& \label{6.8}2C'(y)+\i a C(y)=\sqrt{3}a B(y).\end{align}
After solving these differential equations we have
\begin{align} \label{6.9} & A(y)=c_1 e^{iay},\\&\label{6.10}  B(y)=(b_2e^{\sqrt{3}ay}+b_3)e^{-\frac{1}{2}(\i +\sqrt{3})ay},\\& \label{6.11} C(y)=(b_2e^{\sqrt{3}ay}-b_3)e^{-\frac{1}{2}(\i +\sqrt{3})ay}
\end{align} for some constant vectors $c_1,b_2,b_3$.
Combining these with \e{6.5} gives
\begin{align} \notag & \tilde L(x,y)=e^{\frac{\i }{2a}(x-a^2y)}\left\{c_2 \cosh\(\text{\small$\frac{\sqrt{3}}{2a}$}(x+a^2y)\)  +c_3\sinh\(\text{\small$\frac{\sqrt{3}}{2a}$}(x+a^2y)\)\right\}\\&\notag \hskip1.8in 
+c_1 e^{\i (ay-\frac{x}{a})},\end{align} 
where $c_1,c_2,c_3$ are vectors in ${\bf C}^3_1$. Consequently, after choosing suitable initial conditions we obtain the immersion \e{6.1}. 
\end{proof}

\section{Minimal flat slant surfaces in $CH^2_1(-4)$}

Similarly, \ we have the following  classification of minimal flat Lagrangian surfaces in $CH^2_1(-4)$.
\begin{theorem}  \label{T:7.1}  If $L:M\to CH^2_1(-4)$ is a  minimal flat slant surfaces in  the Lorentzian complex projective plane $CH^2_1(-4)$, then $L$ is Lagrangian. Moreover, it is congruent to $\pi\circ \tilde L$, where 
\begin{equation}\begin{aligned} \label{6.12} & \tilde L(x,y)=\text{\small$\frac{1}{\sqrt{3}}$}\Bigg(\sqrt{2}e^{-\frac{\i }{2a}(x+a^2y)}  \cosh\(\text{\small$\frac{\sqrt{3}}{2a}$}(x-a^2y)\)  ,e^{\i (ay+\frac{x}{a})}, \\&\hskip1.3in\sqrt{2}e^{- \frac{\i }{2a}(x+a^2y)} \sinh\(\text{\small$\frac{\sqrt{3}}{2a}$}(x-a^2y)\)\Bigg),\end{aligned}\end{equation} $a$ is a nonzero real number and $\pi:H^5_2(-1)\to CH^2_1(-4)$ is the Hopf fibration.
 \end{theorem}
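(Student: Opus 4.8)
The plan is to mirror the proof of Theorem~\ref{T:6.1} line for line, replacing the positively curved data $(c=1,\ S^5_2(1)\subset{\bf C}^3_1)$ by the negatively curved data $(c=-1,\ H^5_2(-1)\subset{\bf C}^3_2)$. Since $c=-1\ne 0$, Lemma~6.1 immediately forces $L$ to be Lagrangian, so $\alpha\equiv 0$ and consequently $\beta=\mu=\omega=\Phi=0$, exactly as in the proof of Theorem~\ref{P:4.1}. Choosing flat null coordinates with $g_o=-dx\otimes dy-dy\otimes dx$ and $e_1=\partial/\partial x,\ e_2=\partial/\partial y$, the minimality shape \e{4.2} reduces, via $\alpha=0$ and the Codazzi relations, to $\gamma=\gamma(x)$ and $\lambda=\lambda(y)$; the Gauss equation \e{4.12} evaluated at $\alpha=0$ then reads $\gamma\lambda=-c=1$, which (upon differentiating) makes $\gamma,\lambda$ constant. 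I would normalize $\lambda=a^3,\ \gamma=1/a^3$ for a nonzero real $a$, so that, with $Je_1=e_3$ and $Je_2=e_4$, the second fundamental form takes the Lagrangian form
\begin{align*}
h(e_1,e_1)=\tfrac{1}{a^3}Je_2,\qquad h(e_1,e_2)=0,\qquad h(e_2,e_2)=a^3 Je_1,
\end{align*}
the exact analogue of \e{6.3}.

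Next I would take a horizontal lift $\tilde L:M\to H^5_2(-1)$ of $L$ through the Hopf fibration $\pi:H^5_2(-1)\to CH^2_1(-4)$ and write the formula of Gauss for the composite immersion $M\to H^5_2(-1)\to{\bf C}^3_2$. Here the position field satisfies $\<\tilde L,\tilde L\>=-1$, so the pseudo-hyperboloid contributes the term $+\<X,Y\>\tilde L$ rather than the $-\<X,Y\>\tilde L$ of the sphere. Together with the Lagrangian identifications $Je_1\leftrightarrow\i\tilde L_x,\ Je_2\leftrightarrow\i\tilde L_y$ and the vanishing of the vertical O'Neill term (because $\<Je_1,e_2\>=0$ for a Lagrangian surface), this yields the hyperbolic analogue of \e{6.4}:
\begin{align*}
\tilde L_{xx}=\tfrac{\i}{a^3}\,\tilde L_y,\qquad \tilde L_{xy}=-\tilde L,\qquad \tilde L_{yy}=\i a^3\,\tilde L_x.
\end{align*}
Differentiating the first relation in $x$ and substituting the second gives the constant coefficient equation $a^3\,\tilde L_{xxx}=-\i\tilde L$, whose characteristic roots $\tfrac{1}{a}e^{-\i\pi/6},\ \tfrac{1}{a}e^{-\i 5\pi/6},\ \tfrac{\i}{a}$ split the general solution, as in \e{6.5}, into a piece $e^{-\i x/(2a)}\big(B(y)\cosh(\tfrac{\sqrt3}{2a}x)+C(y)\sinh(\tfrac{\sqrt3}{2a}x)\big)$ plus a piece $A(y)e^{\i x/a}$.

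I would then substitute this ansatz into $\tilde L_{xx}=\tfrac{\i}{a^3}\tilde L_y$ to obtain the first order system for the vector coefficients, namely the analogues of \e{6.6}--\e{6.8}: $A'=\i aA$, $\ 2B'+\i aB=-\sqrt3\,aC$, $\ 2C'+\i aC=-\sqrt3\,aB$, the last two differing from \e{6.7}--\e{6.8} only by the sign of the $\sqrt3$ terms. Solving these as in \e{6.9}--\e{6.11} and recombining, the $y$-factor $e^{-\i ay/2}$ produced by $B,C$ merges with $e^{-\i x/(2a)}$ into the phase $e^{-\frac{\i}{2a}(x+a^2y)}$, while the real exponentials $e^{\pm\sqrt3 ay/2}$ shift the hyperbolic arguments to $\tfrac{\sqrt3}{2a}(x-a^2y)$, exactly reproducing the shape of \e{6.12}. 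Finally I would fix the remaining constants by imposing that $\tilde L$ lands on $H^5_2(-1)$ and is horizontal; a direct check confirms that \e{6.12} satisfies $b_{2,3}(\tilde L,\tilde L)=-\tfrac{2}{3}(\cosh^2-\sinh^2)-\tfrac{1}{3}=-1$. Projecting by $\pi$ then returns $L$, completing the classification.

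The routine parts---integrating the three ordinary differential equations and matching initial data---are formally identical to Theorem~\ref{T:6.1}, so I would treat them briefly. The genuinely delicate step, and the one I expect to be the main obstacle, is assembling the Hopf-lift system above with every sign correct: the negative holomorphic curvature flips the mixed equation from $\tilde L_{xy}=\tilde L$ to $\tilde L_{xy}=-\tilde L$, while the Gauss equation simultaneously flips $\gamma\lambda$ from $-1$ to $+1$, forcing the normalization $\lambda=a^3$ in place of $\lambda=-a^3$. These two reversals must combine to turn $a^3\tilde L_{xxx}=\i\tilde L$ into $a^3\tilde L_{xxx}=-\i\tilde L$; it is precisely this passage from the cube roots of $\i$ to those of $-\i$ that replaces the phases of \e{6.1} by their conjugates in \e{6.12}.
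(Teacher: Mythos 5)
Your proposal is correct and follows exactly the route the paper intends: the paper's proof of Theorem \ref{T:7.1} is simply ``argue as in Theorem \ref{T:6.1},'' and you have carried out that adaptation with all the signs right (Gauss giving $\gamma\lambda=+1$, the position-vector term flipping $\tilde L_{xy}=\tilde L$ to $\tilde L_{xy}=-\tilde L$, hence $a^3\tilde L_{xxx}=-\i\tilde L$ and the conjugated phases in \eqref{6.12}). The only detail worth stating explicitly if you write this up is the verification that the resulting $\tilde L$ is horizontal, i.e.\ $b_{2,3}(\tilde L_x,\tilde L)=b_{2,3}(\tilde L_y,\tilde L)=0$, alongside the norm check you already did.
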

\begin{proof} This  can be proved in a way similar to the proof of Theorem \ref{T:6.1}. So, we omit the details.
\end{proof}

\begin{remark} The surfaces defined by (6.1) and  (7.1) are also complete. \end{remark}
\begin{remark}   Further results on minimal Lorentzian surfaces in Lorentzian complex space forms have been later obtained in \cite{c3} (added on May 8, 2008).
\end{remark}

\end{document}